\newcommand{\Rr}{\mathbb{R}}
\newtheorem{thm}{Theorem}
\newtheorem{prop}{Proposition}
\newtheorem{defi}{Definition}
\newtheorem{rema}{Remark}
\newtheorem{coro}{Corollary}
\newtheorem{lem}{Lemma}
\newtheorem{exm}{Example}
\let\c@equation\c@thm
\numberwithin{equation}{section}
\title{Dimension preserving resolutions of singularities of Poisson structures}
\author{Hichem Lassoued}
\address{Laboratoire d'Analyse Numerique et d'Optimisation et de Statistique L.A.N.O.S,  Universit\'e Badji Mokhtar -Annaba- B.P.12 Annaba, 23000 Alg\'erie.
 }
\address{Institut Elie Cartan de Lorraine I.E.C.L., UMR 7502 , Universit\'e de Lorraine,
\^Ile du Saulcy 57010 Metz, France  }
\date{March 10th, 2017}
\begin{document}

\begin{abstract} Some Poisson structures do admit resolutions by symplectic manifolds of the same dimension.
 We give examples and simple conditions under which such resolutions can not exist.
\end{abstract}

\maketitle

\tableofcontents

\section*{Introduction}

Poisson manifolds of dimension $n$ are known to admit symplectic realizations of dimension $2n$ (see \cite{DZ} or \cite{BX16} for the holomorphic case).
Recall that a \emph{symplectic realization} of a Poisson manifold $(M,\pi_M)$ is a triple of $(\Sigma,\Pi_\Sigma,\varphi)$ where $(\Sigma,\Pi_\Sigma)$ is a symplectic\footnote{For us, a symplectic manifold is a
Poisson manifold with an invertible bivector field.} manifold 
and $\varphi : \Sigma \to M$ is a surjective submersion on $(M,\pi_M)$ which is also a Poisson morphism. 

It is obviously impossible to find a symplectic realization of a Poisson manifold of dimension $n=dim M$, unless $M$ is itself symplectic. 
But it is possible to modify the concept of realization by imposing only that $\varphi$ is a surjective map, but not necessarily a submersion.
More precisely, we define symplectic resolutions as follows:

\begin{defi}\label{def:resolutions} 
Let $(M,\pi_M)$ be a real or complex  Poisson manifold of dimension $n$. We call \emph{symplectic resolution} a triple
$(\Sigma,\pi_\Sigma,\varphi)$ where $(\Sigma,\Pi_\Sigma)$ is a symplectic manifold of dimension $n$ and $\varphi : \Sigma \to M$ is a surjective Poisson morphism.
\end{defi}

The term "symplectic resolution" has been used by Arnaud Beauville in the context of the algebraic geometry \cite{sym} and BaoHua Fu \cite{res} for instance
- see for instance \cite{BellamySchedler} for lists of examples of those.
The concept that we have introduced in above is consistent with this previously given meaning.
It can not be really compared,  because we work with in differential geometry, while they work in algebraic geometry,
and singular points are for these authors points where the variety is singular, while for us singular points are those where the Poisson structure is singular. 
But the Poisson structures they resolve are symplectic at regular points, and their resolutions are birational at these points.
Something similar happens here:
we are going to see that the necessary condition for the existence of a symplectic resolution is that the bivector $\pi$ should be invertible
(i.e. symplectic) on an open dense subset of $M$ (see Proposition \ref{prop1}) and our resolutions are local diffeomorphisms at these points.
As a consequence, it makes sense to use the same name.

Many examples of symplectic resolutions exist 
(see Section \ref{sec1}). But we give in this article a simple example of Poisson structure of dimension $2$ which does not have reasonable symplectic resolutions.
 We use this result to exclude a large class of symplectic Poisson manifold which, although there are symplectic on a dense open subset, 
 do not admit reasonable symplectic resolutions. In particular, in the holomorphic setting, we prove that such connected resolutions do not exist
 for non-symplectic Poisson manifolds. 
 
\bigskip
\noindent {\textbf{ Conventions:}} Throughout this article, we denote the Poisson structures by $\pi$ or $\left\{\cdot,\cdot\right\}$ indifferently. We write 
$\pi$ when we see it as a section of $\wedge^2 TM$ and by $\left\{\cdot,\cdot\right\}$ the associated skew-symmetric biderivation of the algebra of smooth or holomorphic
functions on $M$. 
Also, we shall denote by $\pi_X$ or $ \left\{\cdot,\cdot\right\}_X$ a Poisson structure defined on the manifold $X$. 
As we shall in general not  consider two Poisson structures on the same manifold, this notation is not ambiguous. 
When the Poisson structure is invertible, that is, a symplectic Poisson structure, we shall use a capital letter instead, e.g. $\Pi_X$.

\section{Examples of symplectic resolutions of the same dimension in dimension~2}
\label{sec1}

In this section, we give examples of manifolds of dimension $2$ that do admit symplectic resolutions.
Consider the Poisson structure on $M:={\mathbb R}^2$ (or an open subset of ${\mathbb R}^2$) given by: 
\begin{equation}   
\label{laplussimple01}
\left\{ x,y \right\}_{M}=f(x,y)
\end{equation}
\noindent where $x,y$ are the canonical coordinates to $M$, and $f(x,y)$ is a smooth function. 
We suppose that the Poisson manifold $(M,\pi_M)$ described by (\ref{laplussimple01}) admits a symplectic resolution $(\Sigma,\Pi_\Sigma,\varphi)$. 
With $\Pi_{\Sigma}$ the symplectic Poisson structure on $\Sigma$. Since $ M= {\mathbb R}^2$, the map $\varphi$ is of the form $\varphi=(u,v)$,
where $u$ and $v$ are two smooth functions on $\Sigma $ with values in ${\mathbb R}$.
For any choice $(p,q)$ of local coordinates on $(\Sigma,\Pi_{\Sigma})$, the functions $u$ and $v$ are functions of the variables $p$ and $q$.

\begin{prop} 
\label{prop1}
Let $\Sigma $ be a manifold, $\Pi_\Sigma$ a symplectic Poisson structure on $\Sigma$, and $\varphi:\Sigma \to M = {\mathbb R}^2$ a surjective map, 
the pair $(\Sigma,\Pi_\Sigma,\varphi)$ is a symplectic resolution of the Poisson manifold $(M,\pi_M)$ which is given by (\ref{laplussimple01}) if and only if 
\begin{equation}\label{eq:conditionMorphisme}  \{p,q\}_\Sigma  \left(\frac{\partial u}{\partial p}\frac{\partial v}{\partial q}-\frac{\partial u}{\partial q}\frac{\partial v}{\partial p}\right)= f(u,v)
 \end{equation}
\noindent where $p, q$ are local coordinates on $(\Sigma,\Pi_{\Sigma})$ and
$\varphi=(u,v)$.
\end{prop}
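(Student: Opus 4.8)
The plan is to unwind Definition~\ref{def:resolutions}: a symplectic resolution of $(M,\pi_M)$ consists of a symplectic manifold $(\Sigma,\Pi_\Sigma)$ of dimension $\dim M=2$, together with a surjective Poisson morphism $\varphi\colon\Sigma\to M$. In the statement of the proposition the symplectic character of $\Pi_\Sigma$ and the surjectivity of $\varphi$ are assumed, and the equal-dimension requirement is built into the setup (the existence of local coordinates $(p,q)$ on $\Sigma$ matching the two components $u,v$ of $\varphi$). Hence both implications collapse to the single equivalence
\[
\varphi \text{ is a Poisson morphism} \qquad\Longleftrightarrow\qquad \text{(\ref{eq:conditionMorphisme}) holds},
\]
so it suffices to translate ``$\varphi$ is a Poisson morphism'' into a concrete condition and then compute.

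First I would reduce the Poisson-morphism condition to the coordinate functions $x,y$ on $M=\mathbb{R}^2$. Recall that $\varphi$ is a Poisson morphism exactly when $\{g\circ\varphi,h\circ\varphi\}_\Sigma=\{g,h\}_M\circ\varphi$ for all functions $g,h$ on $M$. The expression $(g,h)\mapsto\{g\circ\varphi,h\circ\varphi\}_\Sigma-\{g,h\}_M\circ\varphi$ is a skew-symmetric biderivation from $C^\infty(M)\times C^\infty(M)$ to $C^\infty(\Sigma)$, the latter regarded as a $C^\infty(M)$-module via $\varphi^*$; being a first-order local operator in each slot, such a biderivation is determined by its values on any family of functions whose differentials span the cotangent space at every point. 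Since $dx,dy$ span $T^*\mathbb{R}^2$ everywhere, $\varphi$ is a Poisson morphism if and only if $\{x\circ\varphi,\,y\circ\varphi\}_\Sigma=\{x,y\}_M\circ\varphi$. Using $x\circ\varphi=u$, $y\circ\varphi=v$ and $\{x,y\}_M=f$, the right-hand side is $f(u,v)$, so the condition becomes $\{u,v\}_\Sigma=f(u,v)$.

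It then remains to expand the left-hand side in the local coordinates $(p,q)$. Because $\dim\Sigma=2$, the bivector is of the form $\Pi_\Sigma=\{p,q\}_\Sigma\,\partial_p\wedge\partial_q$, and the Leibniz rule gives the Jacobian-type identity
\[
\{u,v\}_\Sigma=\Pi_\Sigma(du,dv)=\{p,q\}_\Sigma\left(\frac{\partial u}{\partial p}\frac{\partial v}{\partial q}-\frac{\partial u}{\partial q}\frac{\partial v}{\partial p}\right).
\]
Combining this with $\{u,v\}_\Sigma=f(u,v)$ yields precisely (\ref{eq:conditionMorphisme}); conversely, (\ref{eq:conditionMorphisme}) returns $\{u,v\}_\Sigma=f(u,v)$, and reading the previous paragraph backwards recovers the Poisson-morphism property, which together with the standing hypotheses (symplectic $\Pi_\Sigma$, surjective $\varphi$, $\dim\Sigma=2$) makes $(\Sigma,\Pi_\Sigma,\varphi)$ a symplectic resolution.

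The computations are routine; the only point requiring a little care is the reduction to the coordinate functions $x,y$, i.e.\ the fact that a biderivation of the function algebra is determined by its values on functions whose differentials generate the cotangent bundle. In the smooth category this rests on the standard localization property of derivations (bump functions together with Hadamard's lemma); in the holomorphic or polynomial category it is immediate. Everything else is the elementary expansion of a bivector in dimension two.
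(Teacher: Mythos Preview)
Your argument is correct and follows the same route as the paper: reduce the Poisson-morphism condition to the single equation $\{u,v\}_\Sigma=f(u,v)$ via the coordinate functions $x,y$, then expand $\{u,v\}_\Sigma$ in the local coordinates $(p,q)$. You are more explicit than the paper in justifying why checking on $x,y$ suffices (the biderivation argument), but the structure of the proof is identical.
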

\begin{proof}[Proof]
 The map $\varphi$ is Poisson if and only if $ \{\varphi^* x , \varphi^* y\}_\Sigma = \varphi^* \{x,y\} = \varphi^* f(x,y)$, with $(x,y)$
 the coordinates on $M ={\mathbb R}^2$. 
 As $u = \varphi^* x $ et $ v=\varphi^* y$, this condition is equivalent to $ \{u , v\}_\Sigma =  f(u,v)$, the result is obtained by writing in the coordinates $(p,q)$ 
 the bracket $\{ u,v\}_\Sigma $.
\end{proof}

\begin{exm}\normalfont
\label{ex:squares}
This example has appeared  in \cite{c1234}, Section 6.
We construct a symplectic resolution of the Poisson structure on $M:=\Rr^{2}$ given by~: 
\begin{equation}\label{h1} \left\{ x,y \right\}_{M} =x^{2}+y^{2},
\end{equation}
\noindent where $x,y$ denote the canonical coordinates of $M= \Rr^{2}$. Our candidate of symplectic resolution is given by $\Sigma := {\Rr}^{2}$ equipped 
with the canonical symplectic Poisson bracket $ \{q,p\}_ \Sigma =1 $, 
with $(p,q)$ the coordinates on $\Sigma := {\Rr}^2$ 
(we use different letters for coordinates on $M$ and $\Sigma$ which by accident, in our example, happen to be isomorphic). 
We define a map $\varphi$ from $\Sigma $ to $M $ by:
$$\begin{array}{rrcl} \varphi :&  \Sigma& \to & M \\ & (p,q)& \mapsto& (q\sin(pq),q\cos(pq)). 
   \end{array}
 $$

It is easy to check that $\varphi$ is surjective. We are left with the task of showing that this map $\varphi$ is a Poisson morphism, for which it suffices to check that the condition
given by Equation (\ref{eq:conditionMorphisme}) is satisfied.
 This is done by direct computation: as $u(p,q)=q\sin(pq)$ and $v(p,q)=q\cos(pq)$, we have on the one hand $f(u,v) = (q\sin(pq))^2+(q\cos(pq))^2 = q^2 $, 
 and on the other hand: 
\begin{eqnarray*}  \frac{\partial u}{\partial p}\frac{\partial v}{\partial q}-\frac{\partial u}{\partial q}\frac{\partial v}{\partial p}   &=&  
  \frac{\partial q\sin(pq) }{\partial p}\frac{\partial q\cos(pq)}{\partial q}-\frac{\partial q\sin(pq)}{\partial q}\frac{\partial q\cos(pq)}{\partial p}\\
  &=&     q^2 (\sin^2(pq)+\cos^2(pq)) = q^2.\\
 \end{eqnarray*}
 Since $\left\{p,q\right\}_{\Sigma}=1$ and $f(u,v)=q^2$. This proves the claim.
\end{exm}

We can also construct a symplectic resolution of a Poisson structure  more general  than (\ref{h1}).

\begin{exm}\normalfont
\label{ex:moregeneraleverpowers}
We equip $\Rr^{2}:= M$ with the Poisson structure defined by 
$$\left\{x,y\right\}_{M}=x^{2n}+y^{2m}, \hbox{ with $n \geq m > 1$}$$
where the couple $(x,y)$ stands for the canonical coordinates of $M$, and $n$ and $m$ are integers with $n \geq m$. Our candidate of symplectic resolution
is given by $\Sigma := \Rr^{2}$ with the Poisson bracket $\left\{p,q\right\}_{\Sigma}=q^{2n-2m}\sin^2(pq^{2m-1})+\cos^2(pq^{2m-1})$,
with $(p,q)$ being the canonical coordinates of $\Sigma$.
This Poisson bracket is symplectic, because  $q^{2n-2m}\sin^2(pq^{2m-1})+\cos^2(pq^{2m-1})$ is strictly positive an for all $p,q \in {\mathbb R}$.
We define a map $\varphi$ from $\Sigma$ to $M$ by:
$$\begin{array}{rrcl} \varphi :&  \Sigma& \to & M \\ & (p,q)& \mapsto& (q\sin(pq^{2m-1}),q\cos(pq^{2m-1})). 
   \end{array}
 $$
To show that $(\Sigma,\varphi)$ is a symplectic resolution of $M$, it is sufficient to check that the condition given by Equation (\ref{eq:conditionMorphisme}) is satisfied. 
As $u(p,q)=q\sin(pq^{2m-1})$ and $v(p,q)=q\cos(pq^{2m-1})$, we have on the one hand, $f(u,v) = (q\sin(pq))^{2n}+(q\cos(pq))^{2m} $, and on the other hand, by direct computation: 
\begin{eqnarray*}  \frac{\partial u}{\partial p}\frac{\partial v}{\partial q}-\frac{\partial u}{\partial q}\frac{\partial v}{\partial p}   &=&  
  \frac{\partial q\sin(pq^{2m-1}) }{\partial p}\frac{\partial q\cos(pq^{2m-1})}{\partial q} \\ & & -\frac{\partial q\sin(pq^{2m-1})}{\partial q}\frac{\partial q\cos(pq^{2m-1})}{\partial p}\\
  &=&     q^{2m} (\sin^2(pq^{2m-1})+\cos^2(pq^{2m-1})) = q^{2m}.\\
 \end{eqnarray*}
 This proves the claim. Since ${p,q}_{\Sigma}=1$ and $F(u,v)=q^{2m}$, this implies that any Poisson structure of the form 
 $$\left\{x,y\right\}_{M}=\kappa(x,y) (x^{2n}+y^{2m}),$$
 admits a resolution, provided that the function $\kappa(x,y)$ is strictly positive on $M$: it suffices to consider the triple $(\Sigma,(\varphi^* \kappa) \, \Pi_\Sigma,\varphi)$.
\end{exm}

\begin{exm}\normalfont
Examples \ref{ex:squares} and \ref{ex:moregeneraleverpowers} are Poisson structures with isolated singularities. We want to introduce an example with non-isolated singularities.
We equip $M:=\Rr^{2}$ with the bracket given in the canonical coordinates by $\left\{x,y\right\}_M=x$.
 We define $\Sigma  := \Rr^{2}\coprod{\Rr^{2}}\coprod{\Rr^{2}}$ to be the disjoint union of three copies of $\Rr^2$. 
There is a natural symplectic structure on $\Sigma $ obtained by restricting each of the three copies canonical symplectic Poisson structure. 
Consider the map $\varphi : \Sigma \rightarrow M$ defined by : 
\begin{equation*}
  \varphi :=\left\{\begin{array}{rcl}
              (\exp(p),q)   \hbox{ on the first copy } \\
              (-\exp(p),q)  \hbox{ on the second copy } \\
          (0,q)         \hbox{ on the third copy}   
 \end{array}\right.
\end{equation*}
A direct computation using Condition (\ref{eq:conditionMorphisme}) implies that the restriction on all three copies is a Poisson morphism, 
which is sufficient to make $\varphi$ a Poisson morphism. The map $\varphi$ is surjective, making the couple $(\Sigma,\varphi)$ a symplectic resolution  of $(M,\pi)$.
Note that $\Sigma$ is \emph{not} connected.
\end{exm}

\begin{exm}\normalfont
\label{ex:tropfacile}
There is a general but non-satisfying construction for making a symplectic resolution of any Poisson manifold of even dimension.
Let $(M,\pi_M)$ be a Poisson manifold of dimension $2d$. 
For all symplectic leaf ${\mathcal S}$ of $\pi_M$, consider the direct
product  $\Sigma_{\mathcal S} := {\mathcal S} \times {\mathbb R}^{2d - 2s}$ where $2s$ is the dimension of ${\mathcal S}$.
Equip  $\Sigma_{\mathcal S}$ with  the direct product Poisson symplectic structure $\Pi_{\Sigma_{\mathcal S} }$, with the understanding
that ${\mathcal S} $, the symplectic leaf,
comes equipped with its induced symplectic structure, and the vector space of even dimension $ {\mathbb R}^{2d - 2s}$ comes equipped with any symplectic Poisson structure. 
The natural map $\varphi_{\mathcal S}$ obtained by projecting ${\mathcal S} \times {\mathbb R}^{2d - 2s}$ onto ${\mathcal S}$, then
by including $ {\mathcal S}$ into $M$ is a Poisson map.

Now, let $S$ be the set of all symplectic leaves.
Let $\Sigma := \coprod_{{\mathcal S} \in S} \Sigma_{\mathcal S}$.
All connected components of this manifold have dimension $2d$ and
are symplectic manifolds, hence $\Sigma$ is symplectic, with respect to a symplectic Poisson structure $\Pi_\Sigma$
whose restriction to ${\Sigma_{\mathcal S} }$ is $\Pi_{\Sigma_{\mathcal S} }$. The map $\varphi: \Sigma \to M$
whose restriction to ${\mathcal S} \in S$ is $ \varphi_{\mathcal S}$ is a surjective Poisson map.
Hence, $(\Sigma,\Pi_\Sigma,\varphi)$ defines a Poisson resolution.
In general, the symplectic leaves form a non-countable family hence $\Sigma$ may not be separable (i.e. does not admit a dense countable subset).
\end{exm}

\section{Of Poisson structures that do not admit symplectic resolutions of the same dimension}

We describe in this section broad classes of Poisson manifolds that can not admit reasonable symplectic resolutions.
As shown in Example \ref{ex:tropfacile}, it is very reasonable to assume that the symplectic resolutions are separable, since
without such a condition they always exists, but they are not very interesting. The first result is the following.

\begin{prop} \label{nari}
A Poisson manifold $(M,\pi)$ that admits a separable symplectic resolution is symplectic on an open dense subset.
\end{prop}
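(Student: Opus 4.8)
The plan is to argue by contradiction, exploiting the fact that the set where $\pi$ is symplectic (invertible) is open, and that the failure of density would force a nonempty open set on which $\pi$ vanishes to rank $<\dim M$ everywhere, which is incompatible with being the image of a symplectic manifold under a Poisson morphism that is, generically, a local diffeomorphism. First I would record the elementary observation that the locus $M_{\mathrm{reg}} \subseteq M$ where $\pi_M$ has maximal rank $n = \dim M$ (i.e.\ is symplectic) is open, so $M \setminus M_{\mathrm{reg}}$ is closed; I want to show its complement is dense, equivalently that $M_{\mathrm{reg}}$ is dense, equivalently that $M \setminus M_{\mathrm{reg}}$ has empty interior. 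So suppose, for contradiction, that there is a nonempty open set $U \subseteq M$ on which $\pi_M$ is everywhere degenerate.

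Next I would pull this back through the resolution $\varphi : \Sigma \to M$. Since $\varphi$ is a surjective Poisson morphism and $\dim \Sigma = \dim M = n$, the differential $d\varphi$ and the comparison of bivectors give $\varphi_* \Pi_\Sigma = \pi_M$ wherever $\varphi$ is a local diffeomorphism; more precisely, at any point $\sigma \in \Sigma$ where $d_\sigma\varphi$ is an isomorphism, $\pi_M$ at $\varphi(\sigma)$ is the pushforward of the invertible bivector $\Pi_\Sigma$, hence invertible. So every point of $U$ that lies in the image of the set $\Sigma_{\mathrm{imm}} := \{\sigma : d_\sigma\varphi \text{ invertible}\}$ would be a regular point, contradicting the choice of $U$. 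Therefore $\varphi^{-1}(U)$ is disjoint from $\Sigma_{\mathrm{imm}}$, i.e.\ $d\varphi$ is degenerate at every point of the open set $\varphi^{-1}(U)$ (nonempty because $\varphi$ is surjective).

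The heart of the argument is then to derive a contradiction from the existence of a nonempty open set $V := \varphi^{-1}(U) \subseteq \Sigma$ on which $d\varphi$ is everywhere of rank $< n$. Here I would use separability of $\Sigma$ together with Sard's theorem: the set of critical values of $\varphi|_V$ has measure zero in $M$, so $\varphi(V)$, being contained in the critical values (every point of $V$ is critical), has empty interior in $M$; but $\varphi(V)$ need not be all of $U$, so this alone is not quite a contradiction. Instead the cleaner route is: on $V$, $d\varphi$ drops rank everywhere, so by Sard the image $\varphi(V)$ is a measure-zero (hence nowhere dense, using separability to invoke Sard legitimately) subset of $M$; yet $\varphi$ is surjective and $\Sigma = V \sqcup (\Sigma \setminus V)$ — and this does not immediately close either. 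The genuinely correct reduction is to note that $M_{\mathrm{reg}}$ nonempty: since $\Sigma$ is separable and symplectic and $\varphi$ surjective, $\Sigma_{\mathrm{imm}}$ must be nonempty (otherwise $\varphi(\Sigma)$ has measure zero by Sard and cannot be all of $M$), and moreover its image is dense — indeed, $M \setminus \varphi(\Sigma_{\mathrm{imm}})$ is contained in $\varphi(\Sigma \setminus \Sigma_{\mathrm{imm}})$, a set of critical values, hence of measure zero, hence with empty interior; so $\varphi(\Sigma_{\mathrm{imm}}) \subseteq M_{\mathrm{reg}}$ is dense, which gives density of $M_{\mathrm{reg}}$ and completes the proof.

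I expect the main obstacle to be the careful handling of Sard's theorem: it requires the target to be second countable (or the source to be, together with some countability), which is exactly where the separability hypothesis on $\Sigma$ enters, and one must phrase "critical values form a measure-zero set, hence a set with empty interior" correctly for a map between equidimensional manifolds. A secondary technical point is justifying $\varphi_*\Pi_\Sigma = \pi_M$ at immersion points from the Poisson-morphism condition — this is the equidimensional analogue of the computation in Proposition \ref{prop1}, and I would either cite that local coordinate identity or give the one-line intrinsic argument that a Poisson map which is a local diffeomorphism transports $\Pi_\Sigma$ to $\pi_M$, so invertibility is preserved.
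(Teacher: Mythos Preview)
Your proposal is correct and, once you discard the initial contradiction detour and arrive at the ``genuinely correct reduction,'' it is exactly the paper's argument: points where $d\varphi$ is invertible map to symplectic points of $\pi_M$, and by Sard's theorem (invoked via separability) the complement of their image lies in the set of critical values, hence has measure zero and empty interior, so $M_{\mathrm{reg}}$ is dense. The paper's proof is just the terse version of your final paragraph, without the preliminary false starts.
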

\begin{proof}[Proof]
For all $\sigma \in \Sigma$ such that $d_\sigma \varphi $ is surjective, hence invertible,  $\varphi$ is a local diffeomorphism
from a neighborhood of $\sigma$ in $\Sigma$ to a neighborhood of $\varphi (\sigma) $ in $M$.
Hence, the bivector field $\pi_M$ is symplectic at the point $\varphi (\sigma) $.

Now, since $\varphi$ is surjective, it follows from Sard's Theorem (which is precisely
valid for smooth maps between separable manifolds) that the image through $\varphi$ of the set of points in $\Sigma$
where the differential of $\varphi$ is surjective (and therefore is a local diffeomorphism) is an open dense subset of $M$. 
This completes the proof.
\end{proof}

For Poisson manifolds that are symplectic on an open dense subset, the singular points of the Poisson structures
are exactly the points where the bivector field is not invertible. The following proposition will be used several times.
A critical value of $\varphi$ is a point $m \in M$ where $\sigma\in\Sigma$ with $\varphi(\sigma)=m$ and $d_{\sigma}\varphi$ is not surjective. A singular point of a Poisson structure is point $m$ where $\pi^{\#}_m:T_{m}^* M\rightarrow T_{m}M$ is not of maximal rank.
  
\begin{prop}
Let $(\Sigma,\pi_\Sigma,\varphi)$ be a symplectic resolution of the Poisson manifold $(M,\pi_M)$.
\begin{enumerate}
 \item Singular points of $\pi_M$ coincides with the set of critical values of $\varphi$.
 \item The differential $T_{\sigma}\varphi$ of $\varphi$ at a point $\sigma \in\Sigma$ is invertible 
if and only if $\varphi(\sigma)$ is a regular point of $\pi_M$.
\end{enumerate}
\end{prop}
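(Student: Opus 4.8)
The plan is to reduce both assertions to the pointwise (infinitesimal) form of the Poisson-morphism condition. Recall that a map $\varphi:(\Sigma,\pi_\Sigma)\to(M,\pi_M)$ is a Poisson morphism if and only if, at every $\sigma\in\Sigma$, the bivectors are $\varphi$-related, $\pi_M|_{\varphi(\sigma)}=(\wedge^2 T_\sigma\varphi)\,\pi_\Sigma|_\sigma$; equivalently, the sharp maps satisfy
\begin{equation*}
\pi_M^{\#}|_{\varphi(\sigma)}=T_\sigma\varphi\circ\pi_\Sigma^{\#}|_\sigma\circ(T_\sigma\varphi)^{*},
\end{equation*}
where $(T_\sigma\varphi)^{*}:T^*_{\varphi(\sigma)}M\to T^*_\sigma\Sigma$ is the transpose of the differential. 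I would begin by recording this identity together with the two structural facts that drive the argument: $\dim\Sigma=\dim M$ (call it $n$), and $\pi_\Sigma^{\#}|_\sigma$ is a linear isomorphism because $\pi_\Sigma$ is symplectic.

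With this in hand I would prove (2) first, since (1) follows from it. Assume $T_\sigma\varphi$ is invertible; then $(T_\sigma\varphi)^{*}$ is invertible as well, and the displayed identity exhibits $\pi_M^{\#}|_{\varphi(\sigma)}$ as a composite of three isomorphisms, hence an isomorphism, i.e.\ $\varphi(\sigma)$ is a regular (symplectic) point of $\pi_M$; this is exactly the local-diffeomorphism remark already used in the proof of Proposition~\ref{nari}. Conversely, the same identity shows $\operatorname{Im}\big(\pi_M^{\#}|_{\varphi(\sigma)}\big)\subseteq\operatorname{Im}(T_\sigma\varphi)$, so
\begin{equation*}
\operatorname{rank}\big(\pi_M^{\#}|_{\varphi(\sigma)}\big)\le\operatorname{rank}(T_\sigma\varphi)\le n .
\end{equation*}
If $\varphi(\sigma)$ is a regular point, the leftmost quantity equals $n$, which forces $\operatorname{rank}(T_\sigma\varphi)=n$, so $T_\sigma\varphi$ is invertible. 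This proves (2).

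Finally, (1) is obtained by unwinding the definitions and using that $\varphi$ is onto. If $m\in M$ is a critical value of $\varphi$, there is $\sigma\in\varphi^{-1}(m)$ with $d_\sigma\varphi$ not surjective, hence (equal dimensions) not invertible; by (2), $m=\varphi(\sigma)$ is not regular, i.e.\ $m$ is a singular point of $\pi_M$. Conversely, if $m$ is a singular point, surjectivity of $\varphi$ provides some $\sigma\in\varphi^{-1}(m)$, and the contrapositive of (2) shows $d_\sigma\varphi$ is not invertible, so $m$ is a critical value (indeed every preimage of $m$ is then a critical point of $\varphi$). I do not anticipate a genuine obstacle: once the sharp-map identity is in place the argument is pure linear algebra. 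The only point demanding care is the convention for a \emph{regular point}: since $\Sigma$ and $M$ share the same (even) dimension $n$, a regular point of $\pi_M$ has to be read as one where $\pi_M^{\#}$ is invertible, i.e.\ of full rank $n$, and it is precisely with this reading that the rank inequality above closes the argument.
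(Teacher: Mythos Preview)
Your proof is correct and follows essentially the same route as the paper: both hinge on the identity $\pi_M^{\#}|_{\varphi(\sigma)}=T_\sigma\varphi\circ\Pi_\Sigma^{\#}|_\sigma\circ(T_\sigma\varphi)^{*}$ (the paper phrases it as a commutative square) together with the invertibility of $\Pi_\Sigma^{\#}$ and equality of dimensions. Your argument is in fact a bit more explicit than the paper's, which simply asserts that commutativity of the diagram forces $\pi_m^{\#}$ to be invertible exactly when the vertical arrows are; you spell out the rank inequality $\operatorname{rank}\pi_M^{\#}|_{\varphi(\sigma)}\le\operatorname{rank}T_\sigma\varphi$ that underlies the non-trivial direction.
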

\begin{proof}[Proofs]
The map $\varphi$ is a Poisson map if and only if the following diagram is commutative
for all $m \in M$ and all $\sigma \in \Sigma$ with $\varphi (\sigma) = m$:                     
$$ \xymatrix{ T_{\sigma}^* \Sigma  \ar[r]^{\Pi^{\#}_{\sigma}}& T_{\sigma} \Sigma \ar[d]^{T_{\sigma} \varphi} \\  T_{m}^* M \ar[u]^{T_{\sigma}^* \varphi} \ar[r]^{\pi^{\#}_m}  & T_{m} M.}$$
The commutativity of this diagram implies that  $\pi^{\#}_{m}$ is invertible if and only if vertical arrows are invertible, 
i.e. if and only if  $T_{\sigma}\varphi$ is invertible for all $\sigma$ in the inverse image of $m$ through $\varphi$.
This proves the result.
\end{proof}

Consider  the Poisson structure given by:
    
\begin{equation}\label{laplussimple}\left\{x,y\right\}_{M}=f(x,y)=xg(x,y),
\end{equation}
where $x,y$ are local coordinates on $M:=\Rr^{2}$ and $g(x,y)$ is a smooth function that does not vanish unless $x=0$. We saw in Section \ref{sec1} that, for $g=1$,
this Poisson structure admits a symplectic resolution for which $\Sigma$ is a disjoint union of three copies of ${\mathbb R}^2$. 
But we now show that $(M,\pi_M)$ admits no symplectic resolution when we impose  additional conditions. A resolution is said type proper when $\varphi$ is a \textit{proper} map,  i.e. the inverse image though $\varphi$ of a compact subset of $M$ is a compact subset.

\begin{thm}\label{lepremierthéorème}
The Poisson manifold $(M,\pi)$ described in (\ref{laplussimple}) admits no proper symplectic resolution. 
It does not admit connected real analytic or holomorphic symplectic resolution.
\end{thm}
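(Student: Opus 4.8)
The plan is to establish the two assertions separately, both starting from a common setup. Write $\varphi=(u,v)$ with $u=\varphi^*x$ and $v=\varphi^*y$; by (the proof of) Proposition \ref{prop1}, $\varphi$ being a Poisson morphism amounts to the identity $\{u,v\}_\Sigma = f(u,v)=u\,g(u,v)$ holding on $\Sigma$. Since the singular locus of $\pi_M$ is the line $\{x=0\}$ and $\varphi$ is surjective, the closed set $C:=u^{-1}(0)=\varphi^{-1}(\{x=0\})$ is non-empty and $\varphi(C)=\{x=0\}$, which is uncountable. Moreover $u$ is non-constant (because $\varphi$ reaches points with $x\neq 0$), and on $\Sigma\setminus C$ the map $\varphi$ is a local diffeomorphism onto $M\setminus\{x=0\}$ by the preceding proposition.

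For the \emph{proper} case, fix a singular point $(0,y_0)$ and small $\varepsilon,\delta>0$ with $\bar R:=[0,\varepsilon]\times[y_0-\delta,y_0+\delta]\subset M$, and set $R:=(0,\varepsilon)\times(y_0-\delta,y_0+\delta)$. Since $\varphi$ is proper, $\varphi^{-1}(\bar R)$ is compact, and hence the restriction $\varphi\colon\varphi^{-1}(R)\to R$ is again proper (preimages of compacta are closed in $\Sigma$ and contained in the compact set $\varphi^{-1}(\bar R)$). On $\varphi^{-1}(R)\subset\Sigma\setminus C$ the map $\varphi$ is a local diffeomorphism, so $\varphi\colon\varphi^{-1}(R)\to R$ is a proper surjective local diffeomorphism onto the simply connected set $R$, hence a trivial covering. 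Picking one sheet $U$, the map $\varphi$ restricts to a diffeomorphism — hence, being Poisson, a symplectomorphism — from $U$ onto $R$ equipped with the symplectic structure corresponding to $\pi_M|_R = x\,g(x,y)\,\partial_x\wedge\partial_y$. Therefore the Liouville volume of $U$ equals $\int_R \frac{dx\,dy}{|x\,g(x,y)|}=\infty$, because $|g|$ is bounded on $\bar R$ and $\int_0^\varepsilon dx/x=\infty$. But $U\subset\varphi^{-1}(\bar R)$, which is compact and thus has finite Liouville volume — a contradiction.

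For the \emph{connected real-analytic or holomorphic} case, the point is that $\varphi$ must be locally constant along $C$. Near a smooth point of $C$, choose analytic coordinates $(w,z)$ with $C=\{w=0\}$; by Hadamard's lemma, and since the non-constant analytic function $u$ cannot vanish to infinite order along $C$ (connectedness of $\Sigma$ and the identity theorem), write $u=w^{k}u_k$ with $k\ge 1$ and $u_k(0,z)\not\equiv 0$. Substituting into $\{u,v\}_\Sigma=u\,g(u,v)$, dividing by $w^{k-1}$ and setting $w=0$ yields $u_k(0,z)\,\partial_z v(0,z)=0$, hence $\partial_z v\equiv 0$ along $C$; as $u$ also vanishes on $C$, the map $\varphi=(u,v)$ is locally constant on the smooth locus of $C$. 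By the structure theory of analytic sets, $C$ is a countable union of connected analytic strata (second countability of $\Sigma$), and $\varphi$ is constant on each stratum, so $\varphi(C)$ is countable — contradicting $\varphi(C)=\{x=0\}$.

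The crux is the local computation in the analytic case. It is exactly this step that fails in the $C^\infty$ category, where $u$ may vanish to infinite order along $C$ and $v$ need not be locally constant there — which is why, in a $C^\infty$ statement, connectedness must be replaced by properness, and which leaves room for disconnected $C^\infty$ resolutions such as the three-copy resolution of $\{x,y\}_M=x$. The remaining ingredients — the covering-space plus Liouville-volume estimate in the proper case, and the countability of the family of strata of an analytic subset of a second-countable manifold in the analytic case — are routine.
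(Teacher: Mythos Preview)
Your arguments for both parts are correct, and both differ substantially from the paper's route.

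\medskip

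\textbf{Proper case.} The paper applies Sard's theorem to $v$, picks a regular value $v_0$, and decomposes $v^{-1}(v_0)$ into curves. Using Darboux coordinates $(p,v)$ near a point mapping to $(0,v_0)$, the relation $\partial_p u=f(u,v_0)$ together with Cauchy--Lipschitz forces $u\equiv 0$ on the whole curve through that point. Properness gives finitely many ``good'' curves, which can then be separated into those with $u\equiv0$ and those with $u\neq0$; a sequence $y_n$ with $\varphi(y_n)=(1/n,v_0)$ yields the contradiction. Your covering/Liouville-volume argument bypasses all of this: it is shorter, coordinate-free, and makes transparent exactly what properness buys (finite symplectic volume upstairs versus the divergence of $\int_0^{\varepsilon}dx/x$ downstairs). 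One small omission: in your key identity you should carry the nowhere-vanishing factor $\{w,z\}_\Sigma$ (or, in the proper case, note explicitly that a Poisson diffeomorphism between symplectic manifolds is a symplectomorphism, which you do).

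\medskip

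\textbf{Connected analytic/holomorphic case.} The paper again relies on Darboux coordinates and the ODE $\partial_p u=f(u,v)$: Lemma~\ref{hch2} shows (via Cauchy--Lipschitz and the dichotomy for zeros of analytic functions) that near each $\sigma\in\varphi^{-1}(\mathbf{M}_{\mathrm{sing}})$ with $d_\sigma v\neq 0$ the image meets $\mathbf{M}_{\mathrm{sing}}$ in a single point, and then combines a countable subcover of $\Gamma$ with Sard applied to $v$. Your approach works directly on the analytic hypersurface $C=u^{-1}(0)$: the factorisation $u=w^{k}u_k$ and the Poisson relation force $\partial_z v|_{C}=0$ along smooth $1$-dimensional strata, so $\varphi$ is constant on each stratum and $\varphi(C)$ is countable. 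This avoids both Sard and the ODE machinery, at the cost of invoking the (standard) stratification of analytic sets and second countability of $\Sigma$; note that $u_k(0,z)$ may vanish at isolated $z$, but continuity of $\partial_z v$ closes the gap, which you might make explicit.
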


Of course, when we say "It does not admit real analytic or holomorphic symplectic resolution", we mean that we consider the case where $f(x,y)$ is real analytic or holomorphic, 
and in the holomorphic case, we place ourselves on an open subset of $M\subset{\mathbb C}^2$ a open subset that contains $(0,0)$. We define resolutions in this context as being resolutions
in the previous sense, with $(\Sigma,\Pi_\Sigma,\varphi)$ real analytic or holomorphic.
 
\begin{rema}
The following problem is still open: in the smooth case, does the Poisson manifold $(M,\pi)$ described by (\ref{laplussimple}) admit a connected symplectic resolution?
\end{rema}

Let us prove Theorem \ref{lepremierthéorème}.

\begin{proof}[Proof]
Let $(\Sigma,\Pi_\Sigma,\varphi)$ be a symplectic resolution of $(M,\pi_{M})$. Since $M \simeq {\mathbb R}^2 $, we write $\varphi = (u,v)$
with $u,v$ smooth real valued functions on $\Sigma$. 
According to Sard's theorem \cite{Sard1}, applied to the differentiable function
$v : \Sigma \rightarrow \Rr$, function which is surjective since $\varphi$ is surjective, 
the set of critical values of $v$ admits a dense complement in $\Rr$. 

Let $v_0\in\Rr$ be outside the set of all critical values of $v$. Since $v_{0}$ is not a critical value, 
the inverse image by $v : \Sigma\rightarrow \Rr$ of $q_0$ is an union $({\mathcal C}_{i})_{i\in I}$ of curves.
Since $v$ is a submersion in a neighborhood of all point in $v^{-1}(v_0)$, the curves $({\mathcal C}_i)_{i \in I}$ whose union form $v^{-1}(v_0))$ can be separated:
that is to say there are open sets $(U_i)_{i \in I}$, with $U_i$ containing ${\mathcal C}_i$ for all indice $i\in I$, such that $U_i \cap U_j = \emptyset$
for all distinct $i,j \in I$. 

Let us now call curves of \textit{good type} curves in the previous set on there exists at least one point $\sigma$
with $u(\sigma) \in [-1,1]$. We claim that there are only finitely many curves of the good type.
Let $K$ be the inverse image through $\varphi$ of $ [-1,1]  \times \{v_0\} \subset M$.
Curves of the good type are those that intersect $K$.
Since $\varphi$ is proper, this inverse image $K$ is compact.
But the open subsets $ (U_i \cap K)_{i \in I}$ form a partition of $K$ by open subsets.
Now, in a partition of a compact set by open subset, only finitely many can be non-empty.
Said otherwise, there is a finite subset $j \in J$ such that $\cup_{j\in J}C_{J}$ contains the inverse image of $ [-1,1]  \times \{v_0\} \subset M$ through $ \varphi$.
Said otherwise, there are finitely many curves of the good type.

Consider the point $(0,v_0)\in M := \Rr^{2}$. 
This point is a singular point of $\pi_M$ by the definition thereof. 
Consider a point $\sigma\in \Sigma$ in its inverse image. This point $\sigma$ belongs to a curve ${\mathcal C}_{j_0} $ for a 
certain $j_0 \in I$. We are going to show that the image of ${\mathcal C}_{j_0}$ by $\varphi$ is reduced to the point $(0,v_0)$, or, equivalently, that
 $u$ identically vanishes on this curve ${\mathcal C}_{j_0}$.

Since the function $ v$ has a differential that does not vanish at the point $\sigma$, there exists 
another local function $p$, defined on a neighborhood of $\sigma$, such that the pair $(p,v)$ form local Darboux coordinates on an open subset 
$U_{\sigma}\subset\Sigma$. On $U_{\sigma}\subset\Sigma$ the map
 $\varphi$ reads  $\varphi : (p,q)\rightarrow (u(p,v),v)$ while the restriction $U_{\sigma}\subset\Sigma$ of the curve ${\mathcal C}_{i_0}$ is given by $v=v_0$.
By Proposition \ref{prop1}, since the map $\varphi$ is a Poisson morphism, the following differential equation in the variable $p$ holds for all value of $v$
in a neighborhood of $v_0$:
\begin{equation}\label{hih1} \frac{\partial u}{\partial p}(p,v)=f(u(p,v),v).
\end{equation} 
In particular, for $v=v_0$, we obtain the differential equation:
\begin{equation}
  \left\{\begin{array}{rcl}
              \frac{\partial u}{\partial p}(p,v_0)=f(u(p,v_0),v_0)\\
               \hbox{ and }  u(p_{0},v_{0})=0.
 \end{array}\right.
\end{equation} 
where $(p_{0},v_{0})$ are coordinates of the point $\sigma$. Since $f(0,v)=0$ for all value of $v$ and in particular $f(0,v_0)=0$, the Cauchy-Lipschitz
theorem imply that the differential equation (\ref{hih1}) admits for solution the zero function, i.e. $u(p,v_{0})=0$ for all $p$ close to zero.
In particular, the function $u$ vanishes on the restriction of the curve ${\mathcal C}_{j_0}$ to a neighborhood of $\sigma \in \Sigma$.
One can repeat the above deduction in a neighborhood of every point in ${\mathcal C}_{j_0}$, 
which proves that the function $u$ vanishes identically on the whole curve ${\mathcal C}_{j_0}$.

As previously stated, the inverse image by $v : \Sigma\rightarrow \Rr$ of $v_0$ is a union of curves.
The conclusion of the previous lines is that there are two types of such curves, those on which the restriction of the function $u$ is never equal to $0$ 
(curves that we call curves of the \textit{first type}) and those where $u$ vanishes identically, (curves that we call curves of the \textit{second type}). 
Since the map $\varphi$ is a surjection, there is necessarily at least one curve of the first type  is of the good type. Also all curves of the second type are of the good type.

Since there are only finitely many curves of the good type, those which are both of the good type and of the first type and those the second type
can be separated by two open subsets $V$ and $W$ of $\Sigma$.
 We now consider a sequence $(y_{n})_{n\in \mathbb{N}}\in \Sigma$ such that $\varphi(y_{n})=(\frac{1}{n},v_0)$. For all $n\geq 1$ Since the map $\varphi$ is proper, out of the sequence $y_{n}$, we can extract a convergent subsequence. 
 Let $\tilde{y}\in\Sigma$ be its limit. By construction, $\varphi(\tilde{y})=(0,v_0)$. For any $n\in \mathbb{N}$, the element $y_{n}$ belongs to a curve of the first type
 and of the good type,
 and therefore is in $V$, but its limit has to belong to a curve of the good type and of the second type, and is therefore in $W$. 
 This contradicts the assumption that $V\cap W = \emptyset$, this completes the proof.
\end{proof}

We now look at the real analytic or holomorphic case (i.e, the case where the function $f$ in (\ref{laplussimple}) is a real analytic or holomorphic function, 
and we impose that so are
$\Sigma,\Pi_\Sigma $ and $\varphi$. 
We will start with a lemma:
\begin{lem} \label{hch2}

Let $(\Sigma,\Pi,\varphi)$ be a connected symplectic resolution of the Poisson structure in (\ref{laplussimple}). For any point $\sigma\in\Sigma$ such as $d_{\sigma}v\neq 0$ et $\varphi(\sigma)\in\mathbf{M}_{sing}$,
there exists a neighborhood $U_{\sigma}$ of $\sigma$ such as $\varphi(U_{\sigma})\cap \mathbf{M}_{sing}=\left\{\varphi(\sigma)\right\}$ where $\mathbf{M}_{sing}\subset M$ is the set $\left\{x=0\right\}$, i.e. is the singular set of $\pi_{M}$.
\end{lem}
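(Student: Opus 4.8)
The plan is to pass to local Darboux coordinates adapted to $\sigma$, reduce the Poisson condition to a one‑variable ODE depending on a parameter, and then use analyticity to pin down the vanishing locus of the first component of $\varphi$.

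Write $\varphi=(u,v)$ as in the proof of Theorem \ref{lepremierthéorème}, so that $\varphi(\sigma)=(0,v_0)$ with $v_0=v(\sigma)$. Since $d_\sigma v\neq 0$, I would, exactly as in that proof, pick a local function $p$ near $\sigma$ so that $(p,v)$ are Darboux coordinates on a neighborhood of $\sigma$, and set $\sigma=(p_0,v_0)$. In these coordinates $\varphi$ reads $(p,v)\mapsto(u(p,v),v)$, and Proposition \ref{prop1} yields the differential equation
$$\frac{\partial u}{\partial p}(p,v)=f(u(p,v),v)=u(p,v)\,g(u(p,v),v)$$
for $(p,v)$ near $(p_0,v_0)$. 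Restricting to $v=v_0$ and using $u(p_0,v_0)=0$ together with $f(0,v_0)=0$, uniqueness in the Cauchy--Lipschitz theorem forces $u(p,v_0)=0$ for all $p$ near $p_0$ (this is precisely the step already performed in the proof of Theorem \ref{lepremierthéorème}).

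Now I would invoke analyticity. First, $u$ does not vanish identically on any neighborhood of $\sigma$: otherwise, $\Sigma$ being connected and $u$ real analytic (resp. holomorphic), $u$ would vanish identically on $\Sigma$, whence $\varphi(\Sigma)\subseteq\{x=0\}\subsetneq M$, contradicting the surjectivity of $\varphi$. Since $u(\cdot,v_0)\equiv 0$, the convergent expansion $u(p,v)=\sum_{j\ge 1}a_j(p)(v-v_0)^j$ (with each $a_j$ analytic) has a smallest index $k\ge 1$ with $a_k\not\equiv 0$, and one can factor
$$u(p,v)=(v-v_0)^k\,w(p,v),\qquad w(p,v_0)=a_k(p)\not\equiv 0,$$
with $w$ analytic near $(p_0,v_0)$. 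Substituting this into the differential equation and dividing by $(v-v_0)^k$ (valid off $\{v=v_0\}$, hence everywhere since both sides are analytic) gives
$$\frac{\partial w}{\partial p}(p,v)=w(p,v)\,g\big((v-v_0)^k w(p,v),v\big),$$
and at $v=v_0$ this becomes the linear equation $\partial_p w(p,v_0)=g(0,v_0)\,w(p,v_0)$, with solution $w(p,v_0)=w(p_0,v_0)\,e^{g(0,v_0)(p-p_0)}$. If $w(p_0,v_0)$ were $0$, then $w(\cdot,v_0)\equiv 0$, contradicting the minimality of $k$; hence $w(p_0,v_0)\neq 0$.

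Therefore $w$ is nonvanishing on some neighborhood $U_\sigma$ of $\sigma$, so on $U_\sigma$ we have $u(p,v)=0$ if and only if $v=v_0$, in which case $\varphi(p,v)=(0,v_0)$; thus $\varphi(U_\sigma)\cap\mathbf{M}_{sing}=\{(0,v_0)\}=\{\varphi(\sigma)\}$, as claimed. The heart of the argument, and the only place where analyticity is genuinely used, is the factorization $u=(v-v_0)^k w$ to the \emph{exact} order $k$: minimality of $k$ together with the linear ODE satisfied by $w(\cdot,v_0)$ is exactly what prevents $u$ from vanishing on slices $v=c$ with $c$ arbitrarily close to $v_0$ — which is why the smooth analogue remains open. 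The one routine technical point to check is that $u/(v-v_0)^k$ is genuinely \emph{jointly} analytic, which follows from the joint convergent power series of $u$ at $(p_0,v_0)$.
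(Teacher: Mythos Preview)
Your argument is correct, but the paper reaches the same conclusion by a shorter route. Instead of factoring $u=(v-v_0)^k w$ and deriving a new ODE for $w$, the paper looks only at the one--variable analytic function $h(v):=u(p_0,v)$. Since $h(v_0)=0$, either $h\equiv 0$ near $v_0$ --- in which case, running Cauchy--Lipschitz on each slice $v=\text{const}$ (initial value $0$ and $f(0,v)=0$) forces $u\equiv 0$ on $U_\sigma$, hence on all of $\Sigma$ by connectedness, contradicting surjectivity of $\varphi$ --- or $v_0$ is an isolated zero of $h$. In that case, for every $v\neq v_0$ close to $v_0$ one has $u(p_0,v)\neq 0$, and then the slice ODE $\partial_p u(\cdot,v)=f(u(\cdot,v),v)$ together with uniqueness (again because $f(0,v)=0$) prevents $u(\cdot,v)$ from vanishing at any $p$ near $p_0$; this already gives the desired neighborhood $U_\sigma$. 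Your factorisation costs a bit more (you must verify joint analyticity of $u/(v-v_0)^k$ and run the auxiliary linear ODE for $w(\cdot,v_0)$), but it buys something the paper's proof does not record: the precise vanishing order $k$ of $u$ along $\{v=v_0\}$ and an explicit nonvanishing analytic unit $w$ at $\sigma$.
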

\begin{proof}[Proofs]

For any point $\sigma\in\Sigma$ such that $d_{\sigma} v\neq 0$, there exists a neighborhood $U_{\sigma}$ of $\sigma$ in $\Sigma$ and a function $p$ defined on $\Sigma$ such that the pair $(p,v)$ are local Darboux coordinates on $\Sigma$. the map $\varphi$ reads in these coordinates as : 
$$\begin{array}{rrcl} \varphi :&  U_{\sigma}\subset\Sigma& \to & M \\ & (p,q)& \mapsto& (u(p,q),q) 
   \end{array}
 $$
   
Let $(p_0,v_0)$ be the coordinates of a point $\sigma \in \Sigma$ such that $\varphi(\sigma)=(0,v_0) \in \mathbf{M}_{sing}$.
Proposition \ref{prop1} implies that Relations (\ref{hih1}) are satisfied.
Consider the function $h:v\rightarrow u(p_{0},v)$. Since the function $h$ is real analytic or holomorphic, and $h$ has a zero at $v=v_{0}$, 
there are two possibilities : either this function is  identically equal to zero or it has an isolated zero at $v_{0}$. Equation (\ref{hih1}) 
and Cauchy-Lipschitz theorem give that if $h$ is identically equal to $0$ on a neighborhood of $v_{0}$, 
the function $u$ vanishes at all point of $U_{\sigma}$. Since the function $u$ is real analytic or holomorphic,
this implies that $u=0$ on the whole connected manifold $\Sigma$.
But this is impossible because this contradicts the surjectivity of $\varphi$. Hence $v_{0}$ is necessarily an isolated zero of $h$ 
and Equation (\ref{hih1}) and Cauchy-Lipschitz theorem imply that $u(p,v)$ can not be zero for $v \neq v_{0}$ while $u(p,v_{0})=0$ for all $p$ close to $p_0$. 
More precisely, there exists $\eta',\eta''\in\Rr_+$ such that the inequalities $\left|v-v_{0}\right|<\eta'$ and $\left|p-p_{0}\right|<\eta''$ give
a neighborhood $U_{\sigma}$ of $\sigma$ such as $\varphi(U_{\sigma})\cap \mathbf{M}_{sing}$ is
reduced to a point, namely the point $(0,v_{0})=\varphi(\sigma)$.
\end{proof}
We now show Theorem \ref{lepremierthéorème} in the real analytic or holomorphic cases.

\begin{proof}[Proofs]

Let $(\Sigma,\Pi_\Sigma,\varphi)$ be a connected real analytic or holomorphic symplectic resolution. By construction $\varphi = (u,v)$ with $u,v$ are two 
real analytic or holomorphic functions on $\Sigma$ with values in $\Rr$ or ${\mathbb C}$. We recall that $\mathbf{M}_{sing}$ is the set $\left\{x=0\right\}\subset\mathbb{C}^{2}$.

Let $\Gamma$ be the set of the points $\sigma\in\varphi^{-1}(\mathbf{M}_{sing})$ where $v$ is regular 
(i.e. $d_{\sigma}v\neq 0$). As $\varphi$ is supposed to be surjective, the union of the set of critical values 
of $v$ with $v(\Gamma)$ is $\mathbf{M}_{sing}\cong \mathbb{R}$ or $\mathbb{C}$. By Sard's Theorem \cite{Sard1}, applied to the differentiable function 
$v$, which is surjective because $\varphi$ is surjective, the critical values of $v$ form a set of measure zero in $\mathbf{M}_{sing}\cong\Rr$ or $\mathbb{C}$. 

Moreover, for any $\sigma \in \Gamma$, there exists by Lemma \ref{hch2} a neighborhood $V_{\sigma}$ of $\sigma\in\Sigma$ such that $\varphi(V_{\sigma})\cap \mathbf{M}_{sing}$ 
is reduced to a point $(0,v_{0})=\varphi(\sigma)$. The set $\Gamma$ being a closed subset of the open subset of all regular points of $v$, it is a locally compact set and
we can extract out of any open cover of $\Gamma$ a finite or countable open cover.
In particular, one can extract from the open sets cover $(V_{\sigma})_{{\sigma}\in \Gamma }$ a finite or countable family $\left\{V_{i}\right\}_{i\in\mathbb{N}}$.
This implies that $\varphi(\Gamma)$, just as $v(\Gamma)$, is a finite or countable set.

Now, $\mathbf{M}_{sing}$ is the union of the set of critical values of $v$ with $v(\Gamma)$. But the first set is a set of measure zero (by Sard's Theorem \cite{Sard1}) 
while the second is finite or a countable set. This is impossible. This completes the proof.
\end{proof}

We now enlarge the class of Poisson manifolds  that do not admit symplectic resolutions, although they are symplectic on a dense open subset. 
We refer to \cite{c3} Chapter 5 for the definition of Poisson-Dirac submanifolds of Poisson manifolds and recall that any submanifold $N$ of a Poisson manifold $(M,\pi)$
whose tangent space $T_n N$ is in direct sum with the tangent space of the symplectic leaf ${\mathcal S}_n$ through $n$, namely such that $ T_n N \oplus T_n {\mathcal S}_n = T_n M $,
is Poisson-Dirac in a neighborhood of $n$. we also recall that any submanifold $N$ of a Poisson manifold $M$ such that $T_{n}N\oplus \pi^{\#}(T_{n}N^{\perp})=T_{n}M$ is Poisson-Dirac. Recall also that any Poisson-Dirac submanifold admits an unique induce Poisson structure called \emph{reduced Poisson structure}.

\begin{lem}\label{lem*}
Let $(M,\pi)$ be a Poisson manifold, $S$ be a symplectic leaf and $N\subset M$ a submanifold transversal at a point $n\in S\cap N$.
For every symplectic resolution $(\Sigma,\Pi_\Sigma,\varphi)$ of $(M,\pi)$, there exists a neighborhood $N'$ of $n$ in $N$ such that:
\begin{enumerate}
 \item  $\varphi^{-1}(N')$ is a submanifold of $\Sigma$,
 \item this submanifold is Poisson–Dirac,
 \item  the restriction $\varphi_{N'} : \varphi^{-1}(N')\rightarrow N'$ is a symplectic resolution
for the reduced Poisson structures of $\varphi^{-1}(N')$ and $N'$ respectively.
\end{enumerate}
\end{lem}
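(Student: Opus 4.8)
The plan is to take for $N'$ a small enough neighborhood of $n$ in $N$ and to deduce everything from the commutative square
$$\pi^{\#}_{\varphi(\sigma)} = d_\sigma\varphi\circ\Pi^{\#}_\sigma\circ d_\sigma^{*}\varphi$$
expressing that $\varphi$ is a Poisson map (the square used in the proof of the Proposition above, with $d_\sigma\varphi$ and $d_\sigma^{*}\varphi$ the differential and its transpose), together with the invertibility of $\Pi^{\#}_\sigma$. First I fix $N'$. Since $N$ is transversal to the symplectic leaf $\mathcal S_n$ at $n$, after shrinking $N$ I may assume --- this is how the Poisson--Dirac property recalled just above is established; see \cite{c3}, Ch.~5, or use Weinstein's splitting theorem --- that $N'$ is a Poisson-transversal submanifold, i.e. for every $m\in N'$ one has $T_mN'+\Ima\pi^{\#}_m=T_mM$ and $T_mN'\cap\Ima\pi^{\#}_m$ is a symplectic subspace of the leaf tangent space $\Ima\pi^{\#}_m$. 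In particular $N'$ is Poisson--Dirac, with a reduced Poisson structure $\pi_{N'}$, and $T_mN'+\Ima\pi^{\#}_m=T_mM$ for every $m\in N'$.

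\emph{Item (1).} The square above shows that $\Ima(d_\sigma\varphi)\supseteq\Ima(\pi^{\#}_{\varphi(\sigma)})$ for every $\sigma\in\Sigma$ (apply $d_\sigma\varphi$ last and use that $\Pi^{\#}_\sigma$ is onto). Hence for each $\sigma\in\varphi^{-1}(N')$,
$$\Ima(d_\sigma\varphi)+T_{\varphi(\sigma)}N' \supseteq \Ima(\pi^{\#}_{\varphi(\sigma)})+T_{\varphi(\sigma)}N' = T_{\varphi(\sigma)}M,$$
so $\varphi$ is transversal to $N'$ along $\varphi^{-1}(N')$, and by the preimage theorem $\varphi^{-1}(N')$ is an embedded submanifold of $\Sigma$ of dimension $\dim\Sigma-\operatorname{codim}_M N'=\dim M-\dim\mathcal S_n=\dim N'$. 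It surjects onto $N'$ because $\varphi$ does. This proves (1) and records the dimension equality needed in (3).

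\emph{Items (2) and (3).} Since $(\Sigma,\Pi_\Sigma)$ is symplectic, for a subspace $V\subseteq T_\sigma\Sigma$ the Poisson--Dirac criterion recalled above, $V\oplus\Pi^{\#}_\sigma(V^{\circ})=T_\sigma\Sigma$, holds exactly when $V$ is a symplectic subspace of $(T_\sigma\Sigma,\omega_\sigma)$, $\omega_\sigma$ the symplectic form dual to $\Pi^{\#}_\sigma$, and then the reduced structure on such a $V$ is the one dual to $\omega_\sigma|_V$. So it suffices to show that $V_\sigma:=T_\sigma\varphi^{-1}(N')=(d_\sigma\varphi)^{-1}\big(T_{\varphi(\sigma)}N'\big)$ is a symplectic subspace; then $\varphi^{-1}(N')$ is at once a symplectic manifold. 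Write $L=d_\sigma\varphi$, $B=T_{\varphi(\sigma)}N'$, $m=\varphi(\sigma)$. Using $V_\sigma^{\circ}=L^{*}(B^{\circ})$ (valid for any linear map) one gets $V_\sigma^{\perp_\omega}=\Pi^{\#}_\sigma L^{*}(B^{\circ})$; take $x\in V_\sigma\cap V_\sigma^{\perp_\omega}$ and write $x=\Pi^{\#}_\sigma L^{*}\eta$ with $\eta\in B^{\circ}$. Applying $L$ and the square gives $Lx=\pi^{\#}_m\eta$, which lies in $B$ (because $x\in V_\sigma$) and in $\Ima\pi^{\#}_m$. Since $\eta\in B^{\circ}$, the vector $\pi^{\#}_m\eta$ is $\omega_m$-orthogonal to $B\cap\Ima\pi^{\#}_m$ for the leaf symplectic form $\omega_m$; being itself an element of that space, which is symplectic by Poisson-transversality of $N'$, it must vanish. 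Hence $\eta\in\ker\pi^{\#}_m=(\Ima\pi^{\#}_m)^{\circ}$, so $\eta\in B^{\circ}\cap(\Ima\pi^{\#}_m)^{\circ}=(B+\Ima\pi^{\#}_m)^{\circ}=(T_mM)^{\circ}=\{0\}$, whence $x=0$. This is (2), and it endows $\varphi^{-1}(N')$ with the symplectic structure dual to $\omega_\Sigma|_{\varphi^{-1}(N')}$.

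For (3) it remains to check that $\varphi_{N'}\colon\varphi^{-1}(N')\to N'$ is a Poisson morphism for the two reduced structures (surjectivity and the dimension count being in hand). I would verify that $L=d_\sigma\varphi$ carries the $\omega_\Sigma$-orthogonal splitting $T_\sigma\Sigma=V_\sigma\oplus V_\sigma^{\perp_\omega}$ onto the splitting $T_mM=T_mN'\oplus\pi^{\#}_m\big((T_mN')^{\circ}\big)$: indeed $L(V_\sigma)\subseteq T_mN'$ is immediate and $L\big(V_\sigma^{\perp_\omega}\big)=L\Pi^{\#}_\sigma L^{*}(B^{\circ})=\pi^{\#}_m(B^{\circ})$ is exactly the complementary summand on the $M$ side. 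Through these two direct-sum decompositions the reduced sharp-maps on $\varphi^{-1}(N')$ and on $N'$ are the ambient ones, so the square for $\varphi$ restricts termwise to the square expressing that $\varphi_{N'}$ is Poisson. (This last point is the standard functoriality of Dirac reduction under transverse Poisson maps and could instead be quoted from \cite{c3}.) The main obstacle is item (2): it is the one place where Poisson-transversality of $N'$, rather than merely the direct-sum Poisson--Dirac condition, is needed --- precisely, to rule out $\pi^{\#}_m\eta$ being a nonzero isotropic vector of $B\cap\Ima\pi^{\#}_m$; everything else is openness of transversality and diagram chasing.
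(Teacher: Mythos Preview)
Your argument is correct, but it proceeds quite differently from the paper's proof. The paper works in Weinstein splitting coordinates $(p_1,\dots,p_r,q_1,\dots,q_r,z)$ adapted so that $N'=\{p_i=q_i=0\}$. Since $\varphi$ is Poisson, the Hamiltonian vector fields of $\varphi^{*}p_i,\varphi^{*}q_i$ push forward to the independent $X_{p_i},X_{q_i}$, hence are themselves independent; on the symplectic manifold $\Sigma$ this forces the differentials $d(\varphi^{*}p_i),d(\varphi^{*}q_i)$ to be independent, so their common zero locus $\varphi^{-1}(N')$ is a submanifold. Items (2) and (3) then drop out almost for free: the defining functions satisfy $\{\varphi^{*}p_i,\varphi^{*}q_j\}=\varphi^{*}\{p_i,q_j\}=\delta_{ij}$, so the Poisson matrix of the constraints is invertible, which is the ``second-class constraints'' criterion for a Poisson--Dirac (in fact cosymplectic) submanifold; and the Poisson property of $\varphi_{N'}$ follows by extending functions on $N'$ and using that $\varphi$ is Poisson.

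By contrast, you never choose coordinates: you deduce transversality of $\varphi$ to $N'$ from $\Ima d_\sigma\varphi\supseteq\Ima\pi^{\#}_{\varphi(\sigma)}$, and you prove (2) by a pointwise symplectic linear-algebra computation showing $T_\sigma\varphi^{-1}(N')$ is a symplectic subspace. What your approach buys is a coordinate-free statement that isolates the exact hypothesis used (Poisson-transversality of $N'$, an open condition) and makes the dimension equality $\dim\varphi^{-1}(N')=\dim N'$ transparent. What the paper's approach buys is brevity: once the $p_i,q_i$ are in hand, (2) and (3) are one line each via the invertible bracket matrix, avoiding the orthogonal-complement chase you carry out. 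Your item (3) is the sketchiest part; the paper's version is cleaner there, but your appeal to compatibility of the two direct-sum splittings (equivalently, functoriality of Dirac reduction along transverse Poisson maps) is sound.
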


\begin{proof}[Proofs]
There exists, in a neighborhood $U$ of $n$,  Weinstein coordinates $(p,q,z)$ such that the submanifold $N':=N \cap U$ is given by the equations: 
$ p_1= \dots= p_{r}=q_{1}=  \dots = q_r=0$ (see\cite{c3} Chapter 1 for the definition of Weinstein's coordinate). 
The functions $p_{1},...,p_{r},q_{1},...,q_{r}$ having independent Hamiltonian vector fields at all points of $U$, their pull-back $\varphi^{*}p_{1},...,\varphi^{*}p_{r},\varphi^{*}q_{1},...,\varphi^{*}q_{r}$ also have independent Hamiltonians vector fields
at all point in $\varphi^{-1}(U)$.
Therefore, their pull-backs through $\varphi$ are independent functions. The zero locus that they define is  $\varphi^{-1}(N')$ by construction,
which is therefore a submanifold that we call $\Sigma_{N}$. This proves the first item.

A function $f$ on $M$ such that the hamiltonian vector field $X_f$ tangent to $N$ satisfies that the hamiltonian vector field $X_{\varphi^*f}$ is tangent to $\Sigma_N$. This is because if $\left\{F,q_i\right\}_{\mid_{N}}=0=\left\{F,p_i\right\}_{\mid_{N}}$ then $\left\{\varphi^*F,\varphi^*q_i\right\}_{\mid_{\Sigma_N}}=0=\left\{\varphi^*F,\varphi^*p_i\right\}_{\mid_{\Sigma_N}}$, since $\varphi$ is a Poisson morphism. Yet, $N$ and $\Sigma_N$ are sub-manifolds of Poisson-Dirac, because they are defined by the functions $(p_{1},...,p_{r},q_{1},...,q_{r})$ and $(\varphi^{*}p_{1},...,\varphi^{*}p_{r},\varphi^{*}q_{1},...,\varphi^{*}q_{r})$ respectively, whose Poisson matrix is invertible. This proves the second item.

Now, for all $F,G\in C^{\infty}(N)$, we have $\left\{\varphi^*F,\varphi^*G\right\}_{\Sigma_N}=\left\{\varphi^*\widetilde{F},\varphi^*\widetilde{G}\right\}_{\mid_{\Sigma_N}}=\varphi^*(\left\{\widetilde{F},\widetilde{G}\right\}_{\mid_{N}})$ where $\widetilde{F}$ and $\widetilde{G}$ are local extensions of $F$ and $G$. This proves that $\varphi$ is a Poisson morphism from $\Sigma_N$ to $N$. A Poisson-Direc sub-manifold of a symplectic manifold being itself symplectic, this proves third item.

\end{proof}

\begin{coro} \label{coro2} A Poisson manifold $(M,\pi)$ of dimension $2r+2$ symplectic on a dense open subset 
\begin{enumerate}
\item admits a submanifold $P$ of codimension 1 which included in to the singular locus of $\pi_{M}$,
\item has at least one point $n\in P$ where the rank of $\pi$ is $2r$,
\end{enumerate} 
does not admit a proper symplectic resolution. In the real analytic or holomorphic cases, under the same assumptions it does not admit a symplectic connected resolution.
\end{coro}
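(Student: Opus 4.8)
The plan is to reduce the statement to Theorem~\ref{lepremierthéorème} by restricting a hypothetical resolution to a two--dimensional transverse slice through the prescribed point, via Lemma~\ref{lem*}. So suppose $(M,\pi)$ is as in the statement and admits a symplectic resolution $(\Sigma,\Pi_\Sigma,\varphi)$ which is proper (resp. connected real analytic or holomorphic). Let $n\in P$ be a point where $\mathrm{rank}\,\pi=2r$ and let $S$ be the symplectic leaf through $n$, so that $\dim S=2r$ while $\dim M=2r+2$. I would pick a two--dimensional submanifold $N\ni n$ transverse to $S$ at $n$ and, as a routine dimension count permits (recall $\dim T_nP=2r+1$), also transverse to $P$ at $n$. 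This last transversality is essential: without it $N\cap\mathbf{M}_{sing}$ could reduce near $n$ to the single point $n$, and then Theorem~\ref{lepremierthéorème} would not apply --- such Poisson structures can genuinely have resolutions, as in Example~\ref{ex:squares}.

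Applying Lemma~\ref{lem*} and shrinking $N$ to a neighbourhood $N'$ of $n$, the set $\Sigma_{N'}:=\varphi^{-1}(N')$ is a Poisson--Dirac, hence two--dimensional symplectic, submanifold of $\Sigma$, and $\varphi_{N'}:\Sigma_{N'}\to N'$ is a symplectic resolution of $(N',\pi_{N'})$. To recognise $\pi_{N'}$, observe that the reduction is carried out by the $2r$ functions $\varphi^*p_i,\varphi^*q_i$ whose Poisson matrix is invertible, so $\mathrm{rank}\,\pi_{N'}(m)=\mathrm{rank}\,\pi(m)-2r$ for $m\in N'$; hence the singular locus of $\pi_{N'}$ equals $N'\cap\mathbf{M}_{sing}$, which contains the smooth curve $N'\cap P$. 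Choosing coordinates $(x,y)$ on $N'$ with $\{x=0\}=N'\cap P$, we obtain $\{x,y\}_{N'}=f(x,y)$ with $f(0,y)\equiv 0$, that is $f(x,y)=x\,g(x,y)$ with $g$ smooth (resp. real analytic or holomorphic): a structure of the form~(\ref{laplussimple}). (Even if $N'\cap\mathbf{M}_{sing}$ is strictly larger than the curve $N'\cap P$, so that $g$ vanishes somewhere off $\{x=0\}$, the proof of Theorem~\ref{lepremierthéorème} uses only the relation $f(0,y)\equiv 0$ and therefore still applies.) Now if $\varphi$ is proper, then so is $\varphi_{N'}$, because $\varphi_{N'}^{-1}(K)=\varphi^{-1}(K)$ is compact for every compact $K\subseteq N'$; Theorem~\ref{lepremierthéorème} (read on the open set $N'$) then contradicts the existence of $\varphi_{N'}$. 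This handles the proper case.

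In the connected real analytic or holomorphic case the argument is more delicate, and this is where I expect the main obstacle to be: $\Sigma_{N'}=\varphi^{-1}(N')$ need not be connected --- the structure $\{x,y\}=x$ does admit a disconnected analytic resolution --- so one cannot directly feed $\varphi_{N'}$ into Theorem~\ref{lepremierthéorème}, whose analytic/holomorphic half requires a connected source. The point is to use the connectedness of the ambient $\Sigma$ to show that no connected component $C$ of $\Sigma_{N'}$ can map entirely into the singular curve $N'\cap P$. Indeed such a $C$ would lie in $\varphi^{-1}(P)$, so $\varphi^*\hat x$, for $\hat x$ a local defining function of $P$, would vanish on $C$; combining this with the fact that $\varphi$ is a local diffeomorphism on a dense open subset of $\Sigma$ whose image avoids $\mathbf{M}_{sing}\supseteq P$, and with analyticity on the connected $\Sigma$, one is led to $\varphi(\Sigma)\subseteq P$, contradicting surjectivity. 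Granting this, the proof of Theorem~\ref{lepremierthéorème} (Lemma~\ref{hch2} applied on those components of $\Sigma_{N'}$ where it is legitimate, plus Sard's theorem for $v$) runs for $\varphi_{N'}$ and produces the contradiction. To summarise: the routine steps are the choice of $N$ and the inheritance of properness, whereas the real work is in (a) making sure $\pi_{N'}$ falls under Theorem~\ref{lepremierthéorème} even when $\mathbf{M}_{sing}$ is larger than $P$ near $n$, and (b) controlling the connected components of $\varphi^{-1}(N')$ in the analytic and holomorphic settings.
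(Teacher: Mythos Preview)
Your overall strategy coincides with the paper's: reduce via Lemma~\ref{lem*} to a two--dimensional transverse slice and invoke Theorem~\ref{lepremierthéorème}. One difference worth noting: instead of choosing $N$ transverse to both $S$ and $P$ by a dimension count, the paper first proves that every Hamiltonian vector field is tangent to $P$ (otherwise its flow would carry singular points onto an open set, contradicting density of the symplectic locus); this forces $S_n\subseteq P$, so \emph{any} $N$ transverse to $S_n$ is automatically transverse to $P$ and $P\cap N$ is a curve. Your treatment of the proper case, including the remarks that properness passes to $\varphi_{N'}$ and that Theorem~\ref{lepremierthéorème} only needs $f(0,y)\equiv 0$ (so a possible extra vanishing of $g$ off $\{x=0\}$ is harmless), is correct and in fact more careful than the paper's.

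In the analytic/holomorphic case you rightly flag a point the paper brushes aside with ``similar'': $\Sigma_{N'}=\varphi^{-1}(N')$ need not be connected, so one must rule out components $C$ with $\varphi(C)\subseteq N'\cap P$. Your proposed argument, however, has a gap: from ``$\varphi^*\hat x$ vanishes on $C$'' you cannot conclude by analyticity on the connected $\Sigma$ that $\varphi^*\hat x\equiv 0$, because $C$ has codimension $2r\ge 2$ in $\Sigma$. The missing step is to spread the vanishing from $C$ to an \emph{open} subset of $\Sigma$. This can be done using the very data of Lemma~\ref{lem*}: take $\hat x$ to be a transverse Weinstein coordinate (so $\{\hat x,p_i\}=\{\hat x,q_i\}=0$); then $\varphi^*\hat x$ is invariant under the Hamiltonian flows of $\varphi^*p_i,\varphi^*q_i$, which are transverse to $\Sigma_{N'}$, and hence $\varphi^*\hat x$ vanishes on a full neighbourhood of $C$ in $\Sigma$. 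After that, analytic continuation on the connected $\Sigma$ gives $\varphi^*\hat x\equiv 0$, contradicting surjectivity, and your conclusion follows.
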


\begin{proof}[Proofs]
Let $F$ be a local function in a neighborhood of $n$. Let us prove that the Hamiltonian vector field 
$X_F$ of $F$ is tangent to $P$. Since $P$ has codimension $1$, if there exists a point $n' \in P$ where $X_F$ is not tangent to $P$,
every point in a neighborhood of $n'$ in $M$ is obtained from a point of $P$  by following the flow of $X_F$. But since every point in $P$ is a singular point for $\pi_M$,
and since the flow of $X_F$ is made of Poisson diffeomorphisms, this would imply that every point in a neighborhood of $n'$ in $M$ is a singular point, which contradicts the assumption on $\pi_M$.

Let $N$ be any submanifold of dimension $2$ transversal to the symplectic leaf ${\mathcal S}_n$ through $n$ where $n\in P$ is a point where the rank of $\pi_{M}$ is $2r$. By the previous point $S$ is a submanifold of $P$. 
Since $N$ is transversal to $S_{n}$, $P \cap N$
is a curve in $N$, at least in a neighborhood of $n\in N$. We denote by $P_{N}$ this curve.

Suppose that $(M,\pi_M)$ admits a proper symplectic resolution. Then, 
by Lemma \ref{lem*}, there is a neighborhood $N'$ of $n$ in $N$
that does admit a proper symplectic resolution.

But upon shrinking $N'$ if necessary, we can assume that $N'$
is diffeomorphic to an open subset of ${\mathbb R}^2$, isomorphism under which the curve $P_{N}$ is given by the equation $x=0$,
with $x,y$ being the canonical coordinates on ${\mathbb R}^2$.
The Poisson structure on $N'$ being now of the form (\ref{laplussimple}), it does
not admit  proper symplectic resolutions by Theorem \ref{lepremierthéorème}, which contradicts the previous statement. Hence $(M,\pi_M)$
does not admit proper symplectic resolutions. The real analytic or holomorphic cases are similar.
\end{proof}

\begin{lem} \label{lem} Let $(M,\pi)$ be a Poisson manifold of dimension $n\geq 4$, such that the singular locus of $\pi$ contains 
a submanifold $P\subset M$ of codimension $1$. If a symplectic resolution exists, then the bivector $\pi$ can not be zero at all points in $P$.
\end{lem}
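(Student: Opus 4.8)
The plan is to argue by contradiction, isolating two ingredients: a pointwise bound on $\operatorname{rank}T_{\sigma}\varphi$ along $\varphi^{-1}(P)$ forced by the Poisson condition together with the nondegeneracy of $\Pi_\Sigma$, and a Sard-type argument showing that a map obeying such a bound cannot be surjective onto $P$. I would begin by assuming that $\pi$ vanishes at every point of $P$ and that a symplectic resolution $(\Sigma,\Pi_\Sigma,\varphi)$ exists, which, as throughout this section, I take to be separable (this is automatic when $\varphi$ is proper, since then $\Sigma$ is $\sigma$-compact, and it is a standing hypothesis in the connected real-analytic or holomorphic case). The only input needed is the factorization
\[
\pi^{\#}_{m}=T_{\sigma}\varphi\circ\Pi^{\#}_{\sigma}\circ T_{\sigma}^{*}\varphi\qquad(m=\varphi(\sigma)),
\]
which is just the commutative square characterizing Poisson morphisms recalled above.

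The heart of the matter is the rank bound. Fix $\sigma\in\varphi^{-1}(P)$; then $m=\varphi(\sigma)\in P$, so $\pi^{\#}_{m}=0$ by assumption, and the factorization forces $\operatorname{Im}\bigl(\Pi^{\#}_{\sigma}\circ T_{\sigma}^{*}\varphi\bigr)\subseteq\ker T_{\sigma}\varphi$. Setting $\rho:=\operatorname{rank}T_{\sigma}\varphi=\operatorname{rank}T_{\sigma}^{*}\varphi$ and using that $\Pi^{\#}_{\sigma}$ is an isomorphism (this is where symplecticity of $\Pi_\Sigma$ enters), the left-hand space has dimension $\rho$ while $\ker T_{\sigma}\varphi$ has dimension $n-\rho$, whence $\rho\le n-\rho$, i.e. $\operatorname{rank}T_{\sigma}\varphi\le n/2$. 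Since $n\ge 4$ this gives $\operatorname{rank}T_{\sigma}\varphi\le n-2$ at every point of $\varphi^{-1}(P)$ (in the holomorphic case it bounds the complex rank, hence bounds the real rank by $n<2(n-1)=\dim_{\mathbb{R}}P$, which is all I need).

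I would then conclude with Sard's theorem. Fix $p\in P$ and a chart $U\ni p$ in which $P\cap U=\{x_{1}=0\}$, let $r\colon U\to P\cap U$ be the retraction $(x_{1},x_{2},\dots,x_{n})\mapsto(0,x_{2},\dots,x_{n})$, and put $\Phi:=r\circ\varphi\colon\varphi^{-1}(U)\to P\cap U$. For $\sigma\in\varphi^{-1}(P\cap U)$ one has $\operatorname{rank}T_{\sigma}\Phi\le\operatorname{rank}T_{\sigma}\varphi\le n-2<\dim(P\cap U)$, so every point of $\varphi^{-1}(P\cap U)$ is critical for $\Phi$; hence by Sard's theorem applied to $\Phi$ on the separable manifold $\varphi^{-1}(U)$, the critical values of $\Phi$ form a measure-zero subset of $P\cap U$. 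But $r$ restricts to the identity on $P$, so $\Phi$ agrees with $\varphi$ on $\varphi^{-1}(P\cap U)$, and surjectivity of $\varphi$ makes $\Phi\bigl(\varphi^{-1}(P\cap U)\bigr)=P\cap U$, a nonempty open --- hence positive-measure --- subset of $P$. This contradiction shows that $\pi$ cannot vanish at every point of $P$.

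I expect the rank bound to be the only genuinely non-routine step: the statement that a Poisson morphism onto a region where the target bivector vanishes identically must there have rank at most $n/2$ is the crux, and it is exactly where nondegeneracy of $\Pi_\Sigma$ and the hypothesis $n\ge 4$ enter --- for $n=2$ one obtains only $\operatorname{rank}T_{\sigma}\varphi\le 1=\dim P$, which is why Theorem~\ref{lepremierthéorème} there needs the finer Cauchy-Lipschitz argument together with properness or analyticity. The retraction $r$ is merely a device to recast the rank bound as a critical-point statement to which Sard applies directly; and, as in Theorem~\ref{lepremierthéorème} and Corollary~\ref{coro2}, the use of Sard is what makes separability of $\Sigma$ necessary.
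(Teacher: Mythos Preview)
Your proof is correct and takes essentially the same route as the paper's: compose $\varphi$ with a local retraction onto $P$, invoke Sard's theorem together with surjectivity of $\varphi$, and appeal to symplectic linear algebra --- the paper phrases this last ingredient as the impossibility of having $n-1$ independent functions with pairwise vanishing Poisson brackets at a point of a symplectic $n$-manifold, while you phrase it as the equivalent rank bound $\operatorname{rank}T_\sigma\varphi\le n/2$ coming from the isotropy of $\operatorname{Im}T_\sigma^*\varphi$. The only difference is the order of the steps (the paper first uses Sard to locate a regular value of $\psi\circ\varphi$ and then derives the linear-algebraic contradiction at a single preimage $\sigma$ with $\varphi(\sigma)=p$; you establish the rank bound uniformly along $\varphi^{-1}(P)$ first and then use Sard to contradict surjectivity), which is immaterial.
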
     
\begin{proof}[Proofs] Consider a projection $\psi: U\rightarrow P$ where $U$ is a neighborhood of $P$ in $M$.
Let $(\Sigma,\Pi,\varphi)$ be a symplectic resolution of $(M,\pi)$. The map $\psi \circ \varphi : \varphi^{-1}(U)\rightarrow P$ is surjective because both 
$\varphi$ and $\psi$ are surjective. 
By Sard's Theorem, there exists regular value for $ \psi \circ \varphi$, i.e. there exists point $ p \in P$
such that for every $\sigma \in \varphi^{-1}(U) \subset \Sigma$, the composition
 $T_{\varphi (\sigma)} \psi\circ T_\sigma \varphi$ is a surjective linear map.
Since $\varphi$ is surjective, there exists $\sigma \in \Sigma$
such that  $\varphi ( \sigma) =p $. Such a point belongs to $\varphi^{-1}(U)$ by construction, and 
satisfies that $T_{\varphi (\sigma)} \psi\circ T_\sigma \varphi$ is a surjective linear map.
In turn, this implies that
for any choice $x_1, \dots,x_{n-1}$ of local coordinates on $P$ around $M$, 
the functions  $(\psi \circ \varphi)^* x_1, \dots,(\psi \circ \varphi)^* x_{n-1}$   are linearly independent at the point $\sigma \in \varphi^{-1}(U)  $.

Let us compute the Poisson brackets of these functions at the point $\sigma$. Since the map $\varphi$ is a Poisson morphism, for all $i,j =1, \dots,n-1$:
\begin{eqnarray*}
\left\{(\psi \circ \varphi)^* x_i,(\psi \circ \varphi)^* x_j \right\}_{\Sigma}(\sigma) &=&
\left\{\varphi^{*} \psi^* x_{i},\varphi^{*} \psi^* x_{j}\right\}_{\Sigma}(\sigma) \\
 &=&
\left\{\psi^* x_{i},\psi^* x_j \right\}_{P}(\varphi(\sigma))=  0  
\end{eqnarray*}
But it is impossible to have $n-1$ independent functions on a symplectic manifold whose brackets are equal to zero at a given point. This completes the proof.
\end{proof}

\begin{thm}
\label{theo:final}
A Poisson manifold $(M,\pi)$ symplectic on an open dense subset which contains a submanifold of codimension $1$ of singular points for $\pi_{M}$, does not admit a proper symplectic resolution. When $(M,\pi_{M})$ is a holomorphic Poisson manifold no connected symplectic resolution exists.
\end{thm}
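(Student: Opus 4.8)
The plan is to reduce the statement, by a local transverse (Poisson--Dirac) reduction, to two results already established: Corollary~\ref{coro2} and Lemma~\ref{lem}. Write $\dim M=2r+2$ (the dimension is even since $\pi$ is symplectic on a dense open set) and let $P\subset M$ be the given codimension~$1$ submanifold of singular points. The first step is a preliminary fact, proved exactly as in the proof of Corollary~\ref{coro2}: for every local function $F$ the Hamiltonian vector field $X_F$ is everywhere tangent to $P$. Indeed, if $X_F(n')\notin T_{n'}P$ for some $n'\in P$, then, $P$ having codimension~$1$, the flow of $X_F$ sweeps out a full neighbourhood of $n'$ in $M$ starting from $P$; since that flow consists of Poisson diffeomorphisms it preserves the rank of $\pi$, so such a neighbourhood would lie entirely in the singular locus, contradicting the density of the symplectic locus. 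Hence every symplectic leaf that meets $P$ is contained in $P$.

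Next I would choose the reduction point carefully. Let $2s$ be the maximum of the rank of $\pi$ over the points of $P$; since $P$ lies in the singular locus we have $0\le 2s\le 2r$, and the maximum is attained at some $n\in P$ (the rank takes only finitely many values). Let $\mathcal{S}_n$ be the symplectic leaf through $n$; it has dimension $2s$ and, by the preliminary fact, $\mathcal{S}_n\subset P$. By the Weinstein splitting theorem there is, near $n$, a Poisson isomorphism between a neighbourhood of $n$ and the product of $\mathbb{R}^{2s}$ (with its canonical symplectic structure) and a transversal $(N,\pi_N)$, the transverse Poisson structure $\pi_N$ vanishing at the image $n_0$ of $n$; this $N$ plays the role of the transversal needed in Lemma~\ref{lem*}. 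Since $\mathcal{S}_n\subset P$ and $P$ has codimension~$1$ we get $T_nN\not\subset T_nP$, so $N$ is transverse to $P$ as well, and near $n$ the set $N\cap P$ is a codimension~$1$ submanifold $P_N$ of $N$ while $P$ itself corresponds to $\mathbb{R}^{2s}\times P_N$. Finally, because the rank of $\pi$ at a point $(x,b)$ equals $2s$ plus the rank of $\pi_N$ at $b$, and the rank of $\pi$ is $\le 2s$ along $P$ by the choice of $n$, the structure $\pi_N$ vanishes identically along $P_N$ on a neighbourhood of $n_0$.

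To conclude, assume $(M,\pi)$ has a proper symplectic resolution $(\Sigma,\Pi_\Sigma,\varphi)$. If $2s=2r$, then $n$ is a point of $P$ at which $\pi$ has rank $2r=\dim M-2$, so $(M,\pi)$ satisfies the hypotheses of Corollary~\ref{coro2} and therefore admits no proper symplectic resolution --- a contradiction. If $2s<2r$, then $\dim N=2r+2-2s\ge 4$; by Lemma~\ref{lem*} some neighbourhood $N'$ of $n_0$ in $N$ carries a symplectic resolution for its reduced Poisson structure, namely the restriction of $\varphi$ over $N'$, and this restriction is again proper because $\varphi$ is (the preimage of a compact $K\subset N'$ is $\varphi^{-1}(K)$, compact in $\Sigma$). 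Shrinking $N'$, we may assume that $P_N\cap N'$ is a codimension~$1$ submanifold of $N'$ on which $\pi_{N'}$ vanishes identically, which contradicts Lemma~\ref{lem}. The holomorphic assertion is obtained by the same reduction, replacing ``proper'' by ``connected'' throughout and invoking the holomorphic versions of Corollary~\ref{coro2}, Theorem~\ref{lepremierthéorème} and Lemma~\ref{lem}, exactly in the spirit of the proof of Corollary~\ref{coro2}.

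The delicate step is the second one: one must make sure that $P_N$ genuinely is a codimension~$1$ submanifold of the transversal and, above all, that $\pi_N$ vanishes along \emph{all} of $P_N$ near $n_0$ --- and it is precisely the choice of $n$ as a point where the rank of $\pi$ is maximal along $P$, together with the local product structure from the Weinstein splitting, that forces this. Everything else is bookkeeping: transporting a resolution of $M$ to a resolution of the transverse slice is Lemma~\ref{lem*}, and the two terminal cases (slice of dimension $2$, respectively $\ge 4$) are handed to Corollary~\ref{coro2}, respectively Lemma~\ref{lem}.
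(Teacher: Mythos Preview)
Your argument is correct, but it is organised differently from the paper's. The paper proves the theorem by induction on $\tfrac12\dim M$: the base case is Theorem~\ref{lepremierthéorème}, and in the inductive step one uses Lemma~\ref{lem} to find a point $m\in P$ with $\pi_m\neq 0$, applies Lemma~\ref{lem*} to a transversal $N$ at $m$, and invokes the induction hypothesis on $(N,\pi_N)$ (which again has a codimension~$1$ singular submanifold $P\cap N$). You instead perform a single reduction by choosing $n\in P$ of \emph{maximal} rank $2s$ along $P$, so that the transverse structure $\pi_N$ vanishes identically on $P_N$; this lets you finish in two terminal cases (Corollary~\ref{coro2} when $\dim N=2$, Lemma~\ref{lem} when $\dim N\ge 4$) without any induction. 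The paper's approach is shorter and more uniform, but each pass through the loop tacitly re-uses the verification that $P\cap N$ is codimension~$1$ and singular; your approach makes that verification once and for all, and the maximal-rank trick gives you the stronger conclusion $\pi_N|_{P_N}\equiv 0$ needed to invoke Lemma~\ref{lem} directly. A small bonus of your route in the holomorphic case: Lemma~\ref{lem} does not require the resolution to be connected, so you never need to argue that $\varphi^{-1}(N')$ is connected, whereas the paper's inductive hypothesis is stated for connected resolutions. One cosmetic remark: the claim that $P$ corresponds locally to $\mathbb{R}^{2s}\times P_N$ is not actually needed for your rank computation (you only use that $b\in P_N\Rightarrow b\in P$), so you could drop it.
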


\begin{proof}[Proofs]
We prove the theorem by induction on $\frac{1}{2}\dim(M)$. If $\frac{1}{2}\dim(M)=1$, the theorem reduces to the statement of Theorem \ref{lepremierthéorème} in local coordinates. We assume that the theorem is true until  $n=\frac{1}{2}\dim(M)$
and we show it for $n+1=\frac{1}{2}\dim(M)$. We assume that the singular locus of $\pi$ contains a submanifold $P$ of codimension $1$. By Lemma \ref{lem}, the rank of $\pi_{M}$ on $P$ can not be zero. Let $m$ be a point where in $P$ where $\pi_{m}\neq 0$. 
\noindent If a symplectic resolution $(\Sigma,\pi_{\Sigma},\varphi)$ of $(M,\pi_{M})$ exists, by Lemma \ref{lem*}, the inverse image of 
any submanifold $N$ transverse to the symplectic leaf through $m$ by $\varphi$ is a symplectic submanifold of $\Sigma$ and the restriction $\varphi_{N} : \varphi^{-1}(N)\rightarrow N$ is a 
symplectic resolution of dimension $2(n+1-r)$ with $2r$ the rank of $\pi_{M}$ at $m$. Since the singular locus of $\pi$ contains a submanifold of codimension $1$, we get a contradiction with the induction hypothesis. This shows the result.
\end{proof}

\begin{coro} A holomorphic Poisson manifold does not admit connected symplectic resolutions unless it is symplectic.
\end{coro}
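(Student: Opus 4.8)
The plan is to deduce this from Theorem \ref{theo:final}. Suppose $(M,\pi)$ admits a connected symplectic resolution $(\Sigma,\Pi_\Sigma,\varphi)$; we must show that $(M,\pi)$ is symplectic. First, $M=\varphi(\Sigma)$ is connected, being the continuous image of the connected manifold $\Sigma$. A connected complex manifold is separable, so Proposition \ref{nari} applies to $\Sigma$ and provides a dense open subset $M_{reg}\subseteq M$ on which $\pi$ is invertible; in particular $\dim M$ is even. If $\dim M=0$ then $M$ is a point and is symplectic, so assume $\dim M\geq 2$, and suppose, for contradiction, that $\pi$ is not everywhere invertible, i.e. that the closed set $\mathbf{M}_{sing}:=M\setminus M_{reg}$ is non-empty.

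The heart of the argument is to exhibit inside $\mathbf{M}_{sing}$ a complex submanifold $P$ of codimension $1$ consisting of singular points of $\pi$. In any system of local holomorphic coordinates, $\pi$ is given by a skew-symmetric matrix of holomorphic functions, whose Pfaffian is a holomorphic function with zero locus exactly $\mathbf{M}_{sing}$, and which is not identically zero because $\pi$ is symplectic on the dense set $M_{reg}$. Thus $\mathbf{M}_{sing}$ is a non-empty complex-analytic hypersurface; being locally the zero set of a single holomorphic function germ that is not the zero germ, it is of pure codimension $1$. Its regular locus — the points near which it is a smooth hypersurface — is then a non-empty complex submanifold $P\subseteq\mathbf{M}_{sing}$ of codimension $1$, every point of which is, by construction, a singular point of $\pi$.

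It remains only to assemble the pieces. The Poisson manifold $(M,\pi)$ is symplectic on the dense open subset $M_{reg}$ and contains the codimension-$1$ submanifold $P$ of singular points, so the holomorphic part of Theorem \ref{theo:final} asserts that $(M,\pi)$ admits no connected symplectic resolution; this contradicts the existence of $(\Sigma,\Pi_\Sigma,\varphi)$. Hence $\pi$ must be everywhere invertible, i.e. $(M,\pi)$ is symplectic.

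The one step that goes beyond bookkeeping is the extraction of $P$, and it is exactly there that holomorphy is indispensable. Over $\mathbb{R}$ the singular locus of a Poisson structure symplectic on a dense open subset can be of codimension $\geq 2$ — witness $\{x,y\}=x^2+y^2$ on $\mathbb{R}^2$, whose isolated singularity does admit a symplectic resolution, see Example \ref{ex:squares} — whereas over $\mathbb{C}$ the zero set of the Pfaffian is forced to be a hypersurface, so a smooth codimension-$1$ stratum is always available. I do not expect any genuine difficulty in this step; it uses only the standard structure theory of complex-analytic hypersurfaces.
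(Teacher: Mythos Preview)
Your proof is correct and essentially identical to the paper's: both identify the singular locus as the zero set of a holomorphic section of a line bundle (you via the Pfaffian of the coordinate matrix of $\pi$, the paper via the top power $\pi^n\in\Gamma(\wedge^{2n}TM)$, which amount to the same thing), extract from it a smooth codimension-one submanifold using the local structure of complex-analytic hypersurfaces (the paper names the Weierstrass preparation theorem explicitly), and then invoke Theorem~\ref{theo:final}. Your additional remarks on connectivity, separability, and the contrast with the real case are not in the paper's terse proof but do not alter the argument.
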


\begin{proof}[Proofs]  A Poisson manifold $(M,\pi_{M})$ that admits a symplectic realization must be symplectic on an open dense subset by Proposition \ref{nari}. 
For such a manifold, the multivector field $\pi^n=\pi\wedge\cdots\wedge\pi$ ($n$ times, where $2n$ is the dimension of $M$) is a section of a vector bundle of rank $1$,
namely $\wedge^{2n}TM$ which vanishes precisely at critical points. If it vanishes in at least one point, i.e, if $(M,\pi_M)$ is not symplectic, 
there is also a submanifold of codimension $1$ where it vanishes (by Weierstrass preparation theorem - which implies that the zero locus of any holomorphic function
contains regular points, around which it is simply a submanifold of codimenion $1$).
Theorem \ref{theo:final} allows to conclude that no symplectic realization exist.

\end{proof}

\end{document}